\newtheorem{thm}{Theorem}[section]
\newtheorem{cor}[thm]{Corollary}
\newtheorem{lem}[thm]{Lemma}
\newtheorem{prop}[thm]{Proposition}
\theoremstyle{definition}
\theoremstyle{theorem}
\newtheorem{rem}[thm]{Remark}
\theoremstyle{claim}
\numberwithin{equation}{section}
\def\F{{\mathbb M}^{n+p}_c}
\def\beric{\Ric_{-}^\lambda}
\DeclareMathOperator\tr{tr}
\DeclareMathOperator\Ric{Ric}
\begin{document}
\title{Rigidity of minimal submanifolds in space forms}

\author{Hang Chen}
\address[Hang Chen]{Department of Applied Mathematics, Northwestern Polytechnical University, Xi' an 710129, P. R. China \\ email: chenhang86@nwpu.edu.cn}
\thanks{Chen supported by NSFC Grant No. 11601426 and by Top International University Visiting Program for Outsanding Young scholars of Northwestern Polytechnical University.}
\author{Guofang Wei}
\address[Guofang Wei]{Department of Mathematics, University of California, Santa Barbara CA 93106 \\ email: wei@math.ucsb.edu}
\thanks{GW partially supported by NSF DMS 1506393}

\begin{abstract}
In this paper, we consider the rigidity for an $n(\geq 4)$-dimensional submanfolds $M^n$ with parallel mean curvature in the  space form $\F$ when the integral Ricci curvature of $M$ has some bound. We prove that, if $c+H^2>0$ and 
$\|\beric\|_{n/2}< \epsilon(n,c, \lambda, H)$
for  $\lambda$ satisfying $ \tfrac{n-2}{n-1} (c+H^2) < \lambda \le c+H^2$, then $M$ is  the totally umbilical sphere $\mathbb S^n(\tfrac{1}{\sqrt{c+H^2}})$. Here $H$ is the norm of the parallel mean curvature of $M$, and $\epsilon(n,c,\lambda, H)$ is a positive constant depending only on $n, c,\lambda$ and $H$. This extends some of the earlier work of \cite{XG13} from pointwise Ricci curvature lower bound to inetgral Ricci curvature lower bound. 
\end{abstract}

\keywords {Rigidity of submanifolds, Integral curvature, Parallel mean curvature}

\subjclass[2000]{53C20, 53C42.}

\maketitle

\section{Introduction}
There is a long history of studying rigidity phenomenon for submanifolds under certain curvature pinching conditions. A lot of a rigidity theorems for closed minimal submanifolds in a sphere were proved by Simons, Chern-do Carmo-Kobayashi, Lawson, Yau and others  (see \cite{Sim68,Law69,CdCK70,Yau74,Yau75,Ito75,Eji79,Gau86,She89,LL92,She92,Li93}). Let $\mathbb S^n(r)$ and $\F$ denote the $n$-dimensional sphere with radius $r$ and the  $(n+p)$-dimensional (simply-connected) space form with constant curvature $c$ respectively, and we will omit the radius $r$ and just denote $\mathbb S^n$ if $r=1$ for simplicity. In 1979, Ejiri proved the following theorem.

\begin{thm}[\cite{Eji79}]\label{thm:E79}
	Let $M$ be an $n$-dimensional ($n\geq 4$) simply connected compact orientable minimal submanifold immersed in $\mathbb S^{n+p}$. If $\Ric^M\geq n-2$, then $M$ is either the totally geodesic submanifold $\mathbb S^n$, the Clifford torus $\mathbb S^{m}(\sqrt{1/2})\times \mathbb S^{m}(\sqrt{1/2})$ in $\mathbb S^{n+1}$ with $n=2m$, or $\mathbb CP_{4/3}^{2}$ in $\mathbb S^{7}$. Here $\mathbb CP_{4/3}^{2}$ denotes the $2$-dimensional complex projective space minimally immersed in $\mathbb S^7$ with  constant holomorphic sectional curvature $4/3$.
\end{thm}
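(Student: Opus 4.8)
The plan is to play the hypothesis $\Ric^M\ge n-2$ against the Simons identity, using it as a \emph{pointwise} algebraic constraint on the second fundamental form rather than merely as a bound on its norm.

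First I would fix local orthonormal frames $e_1,\dots,e_n$ for $TM$ and $e_{n+1},\dots,e_{n+p}$ for the normal bundle, write the second fundamental form as $(h^\alpha_{ij})$, and set $A_\alpha=(h^\alpha_{ij})$ and $S=|h|^2=\sum_{\alpha,i,j}(h^\alpha_{ij})^2$. The Gauss equation in $\mathbb S^{n+p}$ combined with minimality ($\tr A_\alpha=0$) gives $\Ric^M=(n-1)g-\mathcal A$ with $\mathcal A:=\sum_\alpha A_\alpha^2\ge0$. Hence $\Ric^M\ge n-2$ is \emph{equivalent} to the matrix inequality $\mathcal A\le g$; in particular $S=\tr\mathcal A\le n$ pointwise, and every eigenvalue of every $A_\alpha$ lies in $[-1,1]$. (Also, $S\equiv0$ exactly when $M$ is totally geodesic.)

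Next, the Simons-type identity for a minimal submanifold of a space form of curvature $1$ reads
\[
\tfrac12\Delta S=|\nabla h|^2+nS-\Sigma_1-\Sigma_2,\qquad
\Sigma_1=\sum_{\alpha,\beta}\|A_\alpha A_\beta-A_\beta A_\alpha\|^2,\qquad
\Sigma_2=\sum_{\alpha,\beta}\big(\tr(A_\alpha A_\beta)\big)^2,
\]
and the crux is a sharp algebraic inequality making the reaction term $nS-\Sigma_1-\Sigma_2$ nonnegative: for symmetric $n\times n$ matrices with $n\ge4$ and $\sum_\alpha A_\alpha^2\le\mathrm{Id}$ one has $\Sigma_1+\Sigma_2\le nS$. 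For $p=1$ this is just $S^2\le nS$ (the classical Simons--Chern--do Carmo--Kobayashi situation); for $p\ge2$ the estimate $\Sigma_1+\Sigma_2\le(2-\tfrac1p)S^2$ is too weak, and one genuinely needs the full force of $\mathcal A\le\mathrm{Id}$, not just $S\le n$. Granting this, $\Delta S\ge2|\nabla h|^2\ge0$, so compactness forces $S$ to be constant and $\nabla h\equiv0$; that is, $M$ has parallel second fundamental form.

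Finally, if $S\equiv0$ then $M$ is the totally geodesic $\mathbb S^n$. If $S>0$, then $M$ is a compact simply connected full parallel minimal submanifold of a sphere, and the classification of submanifolds with parallel second fundamental form (standard embeddings of symmetric $R$-spaces and their products, after Ferus, Takeuchi, Backes--Reckziegel), intersected with the requirements that $M$ be minimal, $n$-dimensional ($n\ge4$), and satisfy $\Ric^M\ge n-2$, leaves precisely the Clifford torus $\mathbb S^{m}(\sqrt{1/2})\times\mathbb S^{m}(\sqrt{1/2})$ with $n=2m$ and the parallel minimal $\mathbb CP_{4/3}^{2}\subset\mathbb S^{7}$; simple connectedness and orientability enter here to exclude quotients. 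I expect the main obstacle to be the algebraic inequality together with the equality discussion in codimension $\ge2$: turning the Simons reaction term into a nonnegative quantity truly requires exploiting $\mathcal A\le g$ and not merely $S\le n$, and this is where the argument is delicate and where the dimension restriction $n\ge4$ is needed; a secondary but substantial ingredient is the structure theory of parallel submanifolds used in the endgame.
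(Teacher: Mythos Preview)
The paper does not prove this statement: Theorem~\ref{thm:E79} is quoted from Ejiri \cite{Eji79} as background, and no proof is given here. So there is no ``paper's own proof'' to compare against directly.

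That said, your outline is Ejiri's argument, and the paper does reproduce the core algebraic step in its proof of Theorem~\ref{thm_min} (explicitly crediting \cite{Eji79}). The precise form there is: after rotating the normal frame so that $(\sigma_{\alpha\beta})$ is diagonal, one has $\Sigma_2=\sum_\alpha(N(A_\alpha))^2\le S^2$, and by diagonalizing each $A_\alpha$ separately and using $\sum_{\beta,j}(h^\beta_{ij})^2\le (n-1)-\rho$ from the Gauss equation,
\[
\Sigma_1\;\le\;4\big[(n-1)-\rho\big]S-\tfrac{4}{n}\sum_\alpha(N(A_\alpha))^2.
\]
Combining these with $\rho\ge n-2$ and $S\le n$ gives $nS-\Sigma_1-\Sigma_2\ge \tfrac{n-4}{n}S(n-S)\ge0$ for $n\ge4$, which is exactly the inequality you state as the crux. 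So the part you flagged as the main obstacle is handled by this computation; it is not left as a black box.

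One comment on the endgame: invoking the full Ferus/Takeuchi classification of parallel submanifolds is correct but heavier than necessary. Once $\nabla h\equiv0$ and the reaction term vanishes identically, the equality conditions in the two estimates above (equality in $(\lambda_i^\alpha-\lambda_j^\alpha)^2\le2((\lambda_i^\alpha)^2+(\lambda_j^\alpha)^2)$, in Cauchy--Schwarz for $\tr A_\alpha^4\ge\tfrac1n(N(A_\alpha))^2$, and in $\sum(N(A_\alpha))^2\le S^2$) already force the eigenvalue structure that pins down the three models; this is closer to how Ejiri argues. Your route via the structure theory works, but the equality analysis is more self-contained.
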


In 1992, Shen \cite{She92} proved that any $3$-dimensional compact orientable minimal submanifold $M$ immersed in $\mathbb S^{3+p}$ with $\Ric^M >1$ must be totally geodesic. Later, Li \cite{Li93} improved the pinching constant in Ejiri's theorem for odd-dimensional cases. In 2011, Xu and Tian \cite{XT11} pointed out the assumption that $M$ is simply connected in Ejiri's theorem can be removed. In 2013, Xu and Gu proved the following generalized Ejiri rigidity for compact submanifolds with parallel mean curvature in space forms.

\begin{thm}[Theorem 3.3 in \cite{XG13}]
		Let $M$ be an $n$-dimensional ($n\geq 3$)  compact orientable submanifold with parallel mean curvature in the space form $\F$ with $c+H^2> 0$. Here  $H$ is the norm of the parallel mean curvature of $M$.  If $\Ric^M\geq (n-2)(c+H^2)$, then $M$ is either a totally umbilical  sphere  $\mathbb S^n(\frac{1}{\sqrt{c+H^2}})$, the Clifford torus $\mathbb S^{m}(\frac{1}{\sqrt{2(c+H^2})})\times \mathbb S^{m}(\frac{1}{\sqrt{2(c+H^2})})$ in the totally umbilical  sphere  $\mathbb S^{n+1}(\frac{1}{\sqrt{c+H^2}})$ with $n=2m$, or $\mathbb CP_{\frac{4}{3}(c+H^2)}^{2}$ in $\mathbb S^{7}(\frac{1}{\sqrt{c+H^2}})$.
\end{thm}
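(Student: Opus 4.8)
\emph{Proof strategy.}
My plan is to reduce the statement to Ejiri's theorem (Theorem~\ref{thm:E79}). Let $h$ be the second fundamental form of $M$ in $\F$, let $\mathcal H$ be its mean curvature vector ($|\mathcal H|=H$), and let $\mathring h=h-\langle\cdot,\cdot\rangle\mathcal H$ be the traceless part. The Gauss equation gives, for a unit tangent vector $X$ and an orthonormal tangent frame $\{e_j\}$,
\[
\Ric^M(X,X)=(n-1)(c+H^2)+(n-2)\langle\mathcal H,\mathring h(X,X)\rangle-\sum_j|\mathring h(X,e_j)|^2 .
\]
Summing over an orthonormal basis the cross term drops (since $\mathring h$ is traceless), so the hypothesis $\Ric^M\ge(n-2)(c+H^2)$ forces the pointwise bound $|\mathring h|^2\le n(c+H^2)$; moreover $\Ric^M>0$, consistent with the assumed compactness of $M$. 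If $H=0$ then $c>0$ and $\F=\mathbb S^{n+p}(1/\sqrt c)$, so $M$ is already minimal in a round sphere; if $H\ne 0$, then using that $\mathcal H$ is parallel in the normal bundle, $M$ is compact, and $c+H^2>0$, the standard reduction-of-codimension argument for submanifolds with parallel mean curvature (see \cite{XG13} and the references therein) places $M$ as a minimal submanifold of a totally umbilical sphere $\mathbb S^{n+q}\big(1/\sqrt{c+H^2}\big)\subset\F$ for some $q\le p$.

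Now rescale the induced metric by the constant $c+H^2$: the ambient totally umbilical sphere becomes the unit sphere $\mathbb S^{n+q}$, and since the Ricci curvature per unit vector rescales by $1/(c+H^2)$, the pinching becomes exactly $\Ric^M\ge n-2$. For $n\ge 4$ this is the hypothesis of Theorem~\ref{thm:E79}, and by the observation of Xu--Tian \cite{XT11} the simple-connectedness assumption there may be dropped; for $n=3$ one invokes Shen's theorem \cite{She92}, and the two non-spherical alternatives are then vacuous. Hence the rescaled $M$ is the totally geodesic $\mathbb S^n$, the Clifford torus $\mathbb S^m(\sqrt{1/2})\times\mathbb S^m(\sqrt{1/2})$ with $n=2m$, or $\mathbb{CP}^2_{4/3}\subset\mathbb S^7$. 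Undoing the rescaling multiplies all radii by $1/\sqrt{c+H^2}$ and the holomorphic sectional curvature by $c+H^2$, producing precisely the three models in the statement.

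The real content is therefore Theorem~\ref{thm:E79}, and I expect its proof to be the main obstacle, so let me indicate its shape. One sets up a Simons-type Bochner identity $\tfrac12\Delta|\mathring h|^2=|\nabla\mathring h|^2+\langle\mathring h,\Delta\mathring h\rangle$, expands the rough Laplacian of $\mathring h$ using the Codazzi equation (parallel mean curvature makes $\nabla\mathring h$ totally symmetric), the Ricci identity, and the Gauss equation, and obtains an inequality of the form $\tfrac12\Delta|\mathring h|^2\ge|\nabla\mathring h|^2+|\mathring h|^2\big(n(c+H^2)-(2-\tfrac1p)|\mathring h|^2\big)$. When $p=1$, or more generally when $|\mathring h|^2\le\frac{n(c+H^2)}{2-1/p}$ everywhere, the bracket is nonnegative; integrating over the compact $M$ forces $\nabla\mathring h\equiv 0$ and $|\mathring h|^2\in\{0,\,n(c+H^2)\}$, and the classification of submanifolds with parallel second fundamental form (Chern--do Carmo--Kobayashi \cite{CdCK70}, Lawson \cite{Law69}, Yau \cite{Yau74}) yields the sphere, the Clifford torus, and $\mathbb{CP}^2$. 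The genuinely hard case is the range $\frac{n(c+H^2)}{2-1/p}<|\mathring h|^2\le n(c+H^2)$ in codimension $p\ge 2$, where this inequality gives nothing: following Ejiri, one works at a point where $|\mathring h|^2$ attains its maximum, shows that there the second fundamental form essentially lies in a single normal direction, and then propagates this by a connectedness argument to reduce the codimension globally, returning to the hypersurface case or else landing on the exceptional $\mathbb{CP}^2\subset\mathbb S^7$ --- which is exactly why that exceptional example appears. This refined analysis, extracting strong rigidity from the relatively weak pinching $|\mathring h|^2\le n(c+H^2)$, is the step requiring the most care.
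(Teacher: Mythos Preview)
Your high–level plan---reduce to Ejiri's theorem by placing $M$ as a minimal submanifold of a round sphere of curvature $c+H^2$ and then rescaling---is exactly the route the paper indicates (it cites this result from \cite{XG13} and, in proving its own integral analogue, says ``the proof is the same as the proof of Theorem~3.3 in \cite{XG13}''). However, two genuine gaps remain.

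\textbf{The pseudo-umbilical step is missing.} When $H\neq 0$, Yau's reduction-of-codimension lemma (stated in the paper as Lemma~\ref{lem-redu}) applies only once one knows that $M$ is \emph{umbilical} in the direction $e_{n+1}=\mathbf H/H$, i.e.\ pseudo-umbilical. Parallelism of $\mathbf H$ alone does not give this; it is a separate rigidity statement that must be proved from the Ricci pinching. The paper's argument (Proposition~\ref{prop_um} in the integral setting, which mirrors \cite{XG13}) runs a Simons-type formula for $S_H=\sum_{i,j}(h^{n+1}_{ij})^2$ and shows $B_2:=S_H-nH^2\equiv 0$. Your proposal hides this inside ``the standard reduction-of-codimension argument'', but it is precisely the nontrivial input.

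\textbf{Your sketch of Ejiri's proof is not how the argument goes.} The inequality you write, $\tfrac12\Delta|\mathring h|^2\ge |\nabla\mathring h|^2+|\mathring h|^2\big(n-(2-\tfrac1p)|\mathring h|^2\big)$, is the Chern--do~Carmo--Kobayashi estimate and only yields the pinching $|\mathring h|^2\le \tfrac{n}{2-1/p}$; it does \emph{not} reach Ejiri's conclusion, and there is no subsequent ``maximum point / connectedness'' argument in Ejiri's proof to cover the remaining range. The actual mechanism (reproduced in the paper as the claims \eqref{eq-clm1}--\eqref{eq-clm2}) feeds the Ricci lower bound directly into the commutator term of Simons' identity: for each fixed $\alpha$ one diagonalises $A_\alpha$ and uses \eqref{Ricci} to get $\sum_{\beta\neq\alpha,j}(h^\beta_{ij})^2\le (n-1)-\rho-(\lambda_i^\alpha)^2$, which yields
\[
\sum_{\alpha,\beta} N([A_\alpha,A_\beta])\le 4\big((n-1)-\rho\big)S-\tfrac{4}{n}\sum_\alpha (N(A_\alpha))^2 .
\]
Together with $\sum_{\alpha,\beta}\sigma_{\alpha\beta}^2\le S^2$ and $S\le n$ one obtains $\tfrac12\Delta S\ge |\nabla h|^2-n\big((n-2)-\rho\big)S$, which is nonnegative once $\rho\ge n-2$; integrating over $M$ then forces the equality analysis that singles out the three models. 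No pointwise maximum principle or codimension--propagation step is involved. This is the key idea you should replace in your outline.
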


In particular, this gives 
\begin{cor}[Corollary 3.4 in \cite{XG13}] \label{Cor}
	Let $M$ be an $n (\geq3)$-dimensional oriented compact submanifold with parallel mean curvature in $\F$ with $c + H^2 > 0$. If $Ric^M >(n-2)(c+H^2)$,   then M is the totally umbilical sphere $\mathbb S^n(\frac{1}{\sqrt{c+H^2}})$.
\end{cor}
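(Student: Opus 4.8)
The plan is to deduce Corollary~\ref{Cor} directly from the classification in Theorem~3.3 of~\cite{XG13} above, using only the \emph{strictness} of the Ricci lower bound to discard the two non-umbilical models. Since $\Ric^M > (n-2)(c+H^2)$ implies $\Ric^M \ge (n-2)(c+H^2)$, that theorem already forces $M$ to be one of three spaces: (i) the totally umbilical sphere $\mathbb S^n\!\left(\tfrac{1}{\sqrt{c+H^2}}\right)$; (ii) the Clifford torus $\mathbb S^{m}\!\left(\tfrac{1}{\sqrt{2(c+H^2)}}\right)\times \mathbb S^{m}\!\left(\tfrac{1}{\sqrt{2(c+H^2)}}\right)$, which occurs only when $n=2m$ is even; or (iii) $\mathbb{CP}^2_{\frac43(c+H^2)}$ in $\mathbb S^7\!\left(\tfrac{1}{\sqrt{c+H^2}}\right)$, which occurs only when $n=4$. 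In particular, for $n=3$ neither exceptional model is available and there is nothing more to prove, so from now on I would take $n\ge 4$ and show that (ii) and (iii) are incompatible with the strict inequality.

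For the Clifford torus, equip $\mathbb S^{m}\!\left(\tfrac{1}{\sqrt{2(c+H^2)}}\right)\times \mathbb S^{m}\!\left(\tfrac{1}{\sqrt{2(c+H^2)}}\right)$ with the product metric: each factor has constant sectional curvature $2(c+H^2)$, while any $2$-plane spanned by one tangent vector from each factor is flat. Hence a unit vector tangent to a single factor satisfies $\Ric^M = (m-1)\cdot 2(c+H^2) = (n-2)(c+H^2)$, using $n=2m$; this contradicts $\Ric^M > (n-2)(c+H^2)$, so (ii) is excluded. For $\mathbb{CP}^2$ carrying the Fubini--Study metric $g$ of constant holomorphic sectional curvature $\tfrac43(c+H^2)$, the metric is Einstein with $\Ric^M = 2(c+H^2)\,g$, and since $n=\dim_{\mathbb R}\mathbb{CP}^2 = 4$ this reads $\Ric^M = (n-2)(c+H^2)\,g$, again contradicting the strict bound; so (iii) is excluded. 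Therefore $M$ must be the totally umbilical sphere $\mathbb S^n\!\left(\tfrac{1}{\sqrt{c+H^2}}\right)$.

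There is essentially no serious obstacle in this argument: the statement is a genuine corollary, and the only content is the two standard Ricci computations above, whose one delicate point is bookkeeping of normalizations --- the radius $\tfrac{1}{\sqrt{2(c+H^2)}}$ of the torus factors and the scaling of the Fubini--Study metric --- so that the exceptional examples land \emph{exactly} on $\Ric^M = (n-2)(c+H^2)$. (Were one instead to prove Theorem~3.3 itself from scratch, the hard part would be the Simons-type estimate on the squared norm of the second fundamental form together with the Gauss-equation manipulation needed to pass from the Ricci pinching to the classification; but here that theorem may be taken as given.)
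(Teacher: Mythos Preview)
Your proposal is correct and is exactly the intended argument: the paper does not spell out a proof but presents the statement as an immediate consequence of the classification in Theorem~3.3 of \cite{XG13}, and your verification that the Clifford torus and $\mathbb{CP}^2_{\frac{4}{3}(c+H^2)}$ realize equality $\Ric^M=(n-2)(c+H^2)$ is precisely what is needed to exclude them under the strict hypothesis. The normalization bookkeeping you flag (the radius of the torus factors and the Einstein constant for the Fubini--Study metric) is handled correctly.
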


Note that the curvature conditions in both original and generalized Ejiri theorems are pointwise lower Ricci curvature bounds. It is natural to ask that if we can improve the pinching condition. In odd-dimensional case, the pinching constant can be lowered down (see Li \cite{Li93}, Xu-Leng-Gu \cite{XLG14}'s results). In this paper, we will consider the integral Ricci curvature condition instead of the pointwise Ricci curvature condition.

For each $x\in M$, let $\rho(x)$ be the smallest eigenvalue of the Ricci tensor at $x$, and $\beric(x)=\max\{0,(n-1)\lambda-\rho(x)\}$ for $\lambda \in \mathbb R$. Define
\begin{equation*}
\|\beric\|_{q}:=\left(\int_M(\beric)^q\right)^{1/q},
\end{equation*}
which measures the amount of Ricci curvature lying below the given bound $(n-1)\lambda$. It is easy to see that $\|\beric\|_{q}=0$ if and only if $\Ric^M\geq (n-1)\lambda$.

Now we can state our main theorems.
\begin{thm}\label{thm_min}
	Let $M$ be an $n$-dimensional ($n\geq 4$)  minimal closed  submanifold in $\mathbb S^{n+p}(r)$. Given $\lambda$ satisfying $(n-2)/r^2<(n-1)\lambda\leq (n-1)/r^2$, if
\begin{equation*}
\|\beric\|_{n/2}< \epsilon_r(n,\lambda), 
\end{equation*}
then $M$ is totally geodesic. Here $\epsilon_r(n,\lambda)$
 is an explicit constant defined in \eqref{const-def-epr}.
\end{thm}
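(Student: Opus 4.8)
The plan is to run the Simons/Bochner argument behind Ejiri's theorem \cite{Eji79}, but instead of a pointwise sign argument to integrate the resulting differential inequality and absorb the curvature defect $\beric$ via a Sobolev inequality. Write $S=|A|^2$ for the squared norm of the second fundamental form of the minimal immersion, and $\psi=|A|=S^{1/2}$. By the Gauss equation and minimality, $\Ric=\tfrac{n-1}{r^2}g-B$, where $B$ is the nonnegative symmetric operator $B_{ij}=\sum_{\alpha,k}h^{\alpha}_{ik}h^{\alpha}_{kj}$; hence $\tr B=S$ and the smallest Ricci eigenvalue is $\rho=\tfrac{n-1}{r^2}-\mu_{\max}(B)$, so $(n-1)\lambda-\rho\le\beric$ everywhere.

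The first step is to establish, for $n\ge4$, a pointwise inequality of the form
\[
\tfrac12\Delta S\ \ge\ |\nabla A|^2+n\Bigl(\rho-\tfrac{n-2}{r^2}\Bigr)S ,
\]
starting from Simons' identity $\tfrac12\Delta S=|\nabla A|^2+\tfrac{n}{r^2}S-\mathcal Q$, where $\mathcal Q=\sum_{\alpha,\beta}\bigl[(\tr A_\alpha A_\beta)^2+\|[A_\alpha,A_\beta]\|^2\bigr]$, and using the algebraic control of $\mathcal Q$ (Chern--do Carmo--Kobayashi \cite{CdCK70}, Li--Li \cite{LL92}) exactly as in the proof of \cite[Theorem~3.3]{XG13}; in fact, when $\beric\equiv0$ (so $\rho\ge(n-1)\lambda>\tfrac{n-2}{r^2}$) this already forces $S\equiv0$ on integration, recovering \cite[Corollary~3.4]{XG13}. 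Inserting $\rho\ge(n-1)\lambda-\beric$ and setting $\delta_0:=n\bigl((n-1)\lambda-\tfrac{n-2}{r^2}\bigr)$, which is positive by the hypothesis $(n-2)/r^2<(n-1)\lambda$, this becomes
\[
\tfrac12\Delta S\ \ge\ |\nabla A|^2+\delta_0\,S-n\,\beric\,S .
\]

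Next I would integrate over the closed manifold $M$ (so $\int_M\Delta S=0$), use $|\nabla A|^2\ge|\nabla\psi|^2$, and apply H\"older's inequality with exponents $\tfrac n2,\tfrac{n}{n-2}$ to the defect term:
\[
\int_M|\nabla\psi|^2+\delta_0\!\int_M\!\psi^2\ \le\ n\!\int_M\!\beric\,\psi^2\ \le\ n\|\beric\|_{n/2}\Bigl(\int_M\psi^{2n/(n-2)}\Bigr)^{(n-2)/n}.
\]
Since $M$ is minimal in $\mathbb S^{n+p}(r)\subset\mathbb R^{n+p+1}$, its mean curvature as a submanifold of $\mathbb R^{n+p+1}$ is bounded by a constant depending only on $n$ and $r$, so the Michael--Simon (Hoffman--Spruck) Sobolev inequality gives a constant $C_S=C_S(n,r)$ with $\bigl(\int_M\psi^{2n/(n-2)}\bigr)^{(n-2)/n}\le C_S\bigl(\int_M|\nabla\psi|^2+\int_M\psi^2\bigr)$. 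Combining,
\[
\int_M|\nabla\psi|^2+\delta_0\!\int_M\!\psi^2\ \le\ nC_S\|\beric\|_{n/2}\Bigl(\int_M|\nabla\psi|^2+\int_M\psi^2\Bigr).
\]
With $\epsilon_r(n,\lambda):=\dfrac{\min\{1,\delta_0\}}{nC_S}$, the hypothesis $\|\beric\|_{n/2}<\epsilon_r(n,\lambda)$ makes both coefficients $1-nC_S\|\beric\|_{n/2}$ and $\delta_0-nC_S\|\beric\|_{n/2}$ strictly positive, forcing $\int_M\psi^2=0$; hence $A\equiv0$ and $M$ is totally geodesic.

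The hard part is the first step: getting the refined Simons inequality with the sharp coefficient $n$ in front of $\rho-\tfrac{n-2}{r^2}$, as this is exactly what makes the admissible range $\tfrac{n-2}{(n-1)r^2}<\lambda\le\tfrac1{r^2}$ coincide with the Ejiri pinching threshold; it rests on the delicate algebraic estimate for the quartic term $\mathcal Q$ and is where $n\ge4$ enters. Everything after that --- H\"older, the Michael--Simon Sobolev inequality, and the absorption --- is routine, and all constants can be tracked, so $\epsilon_r(n,\lambda)$ is an explicit function of $n$, $\lambda$ and $r$.
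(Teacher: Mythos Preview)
Your proposal is correct and follows essentially the same route as the paper: Simons' identity, Ejiri's algebraic bound on the quartic term in terms of $\rho$ (this is the relevant estimate---the Chern--do Carmo--Kobayashi and Li--Li bounds are purely in $S^2$ and do not see $\rho$; the paper reproves Ejiri's inequality \eqref{eq-clm1} directly), then integration, H\"older, and the Hoffman--Spruck Sobolev inequality via the embedding $\mathbb S^{n+p}(r)\hookrightarrow\mathbb R^{n+p+1}$. The only differences are quantitative: the paper replaces your crude Kato $|\nabla A|^2\ge|\nabla\psi|^2$ by the refined version $|\nabla A|^2\ge\tfrac{n+2}{n}|\nabla h_\epsilon|^2$ (Lemma~\ref{lem-kato}), and instead of taking $\min\{1,\delta_0\}$ it keeps the free parameter $t$ in the Sobolev inequality (Lemma~\ref{lem-int-sphere}) and tunes it so that the $\int_M S$ terms cancel exactly---these two refinements are what produce the specific constant \eqref{const-def-epr} rather than a smaller one.
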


In \cite{PW97} Petersen and the second author established  the fundamental  comparison tools, the Laplacian and Bishop-Gromov volume comparisons, for integral Ricci curvature lower bound when $q > \frac n2$. Here we only require smallness of the integral curvatire for $q = \frac n2$ as the manifold is special. 

\begin{rem}
	For a minimal submanifold $M$ in $\mathbb S^{n+p}(r)$, the Ricci curvature of $M$ has the upper bound $(n-1)/r^2$ from \eqref{Ricci} in Section 2. That is why we limit the range of $\lambda$ in Theorem \ref{thm_min}.
\end{rem}

Theorem~\ref{thm_min}
 is a special case of the following result.

\begin{thm}\label{thm_pmc}
	Let $M$ be an $n$-dimensional ($n\geq 4$)  submanifold in $\F$ with parallel mean curvature (PMC). Denote $H$ the norm of the parallel mean curvature of $M$.  Assume $c+H^2>0$. Given $\lambda$ satisfying $(n-2)(c+H^2)<(n-1)\lambda\leq (n-1)(c+H^2)$,
	if 
	\begin{equation*}
	\|\beric\|_{n/2}< \epsilon(n,c,\lambda,H) ,
	\end{equation*}
	then $M$ is  the totally umbilical sphere $\mathbb S^n(\frac{1}{\sqrt{c+H^2}})$. Here $\epsilon(n,c,\lambda,H) $ is an explicit constant defined in \eqref{const-def-ep-pmc}.
\end{thm}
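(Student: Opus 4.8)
The plan is to adapt the classical Simons-type argument for the Laplacian of the squared norm of the second fundamental form, but replace the pointwise Ricci bound in the Xu--Gu proof with an integral estimate obtained by running the Bochner--Simons inequality against the Sobolev/Michael--Simon inequality. First I would pass to the totally umbilical sphere $\S^{n+1}(1/\sqrt{c+H^2})$ in which $M$ sits as a minimal submanifold: since the mean curvature is parallel and $c+H^2>0$, a PMC submanifold of $\F$ is, up to a standard reduction, a minimal submanifold of a round sphere of curvature $c+H^2$, so it suffices to prove Theorem~\ref{thm_min} (with $r^2 = 1/(c+H^2)$) and the general case follows by rescaling. Write $S = |h|^2$ for the squared norm of the second fundamental form; because $M$ is minimal in the sphere, Simons' identity gives
\begin{equation*}
\tfrac12 \Delta S = |\nabla h|^2 + \tfrac{1}{r^2}\, n S - \bigl( \text{quadratic curvature terms in } h \bigr),
\end{equation*}
and the Gauss equation lets one rewrite $\rho(x)$, the lowest Ricci eigenvalue, in terms of $S$ and the ambient curvature; this is where the hypothesis $(n-2)/r^2 < (n-1)\lambda$ enters, guaranteeing that pointwise $S$ is controlled by $\beric$ up to the threshold where $M$ would be totally geodesic.

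Next I would set up the integral estimate. Multiply the Simons inequality by $S^{q-1}$ (with $q = n/2$) and integrate over the closed manifold $M$; integration by parts produces a good term $\tfrac{4(q-1)}{q^2}\int |\nabla S^{q/2}|^2$ on the left, while the Kato inequality $|\nabla h|^2 \ge (1 + \tfrac{2}{n})|\nabla |h||^2$ (valid for minimal submanifolds, due to the trace-free and Codazzi structure of $\nabla h$) gives another favorable gradient term. The zeroth-order terms combine into an expression of the form $\int S^{q}\,(\text{positive constant}/r^2) - \int S^{q-1}\cdot(\text{cubic-in-}h)$; the cubic term is bounded by $c_n \int S^{q+1/2}\cdots$, but more usefully one uses the Gauss equation to bound the bad part of the curvature term by $\int S^{q-1}\,\beric$, which by Hölder is at most $\|\beric\|_{n/2}\,\|S\|_{n/2}^{q-1}\cdot(\text{vol factor})$. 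Feeding the resulting $\int |\nabla S^{q/2}|^2$ into the Michael--Simon Sobolev inequality on $M\subset\S^{n+p}(r)$ (whose constant depends only on $n$, and on $r$ and $H$ after unscaling) converts the gradient term into $\|S^{q/2}\|_{2n/(n-2)}^2 = \|S\|_{n/2}^{q}$, and one obtains an inequality of the shape
\begin{equation*}
\bigl( A(n) - B(n,r)\,\|\beric\|_{n/2} \bigr)\,\|S\|_{n/2}^{q} \le 0 .
\end{equation*}
Choosing $\epsilon_r(n,\lambda)$ — equivalently $\epsilon(n,c,\lambda,H)$ — smaller than $A(n)/B(n,r)$ (this is the content of the explicit formulas \eqref{const-def-epr}, \eqref{const-def-ep-pmc}) forces $\|S\|_{n/2} = 0$, hence $S\equiv 0$ and $M$ is totally geodesic in the sphere, i.e.\ the totally umbilical $\S^n(1/\sqrt{c+H^2})$ in $\F$.

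The main obstacle, and the step requiring the most care, is extracting a \emph{clean} lower bound for the zeroth-order curvature term in the integrated Simons inequality purely in terms of $\beric$ and $S$. Pointwise, Xu--Gu use $\Ric^M \ge (n-2)(c+H^2)$ to absorb the cubic term $\sum \langle h, \cdot\rangle$ outright; here that sign is only available on the region $\{\beric = 0\}$, and on the complement one must pay a penalty proportional to $\beric$. The delicate point is that the quadratic-curvature term in Simons' identity is not simply a function of $S$ when $p \ge 2$ (the normal curvature and the Ricci-like term $\sum_\alpha (\operatorname{tr} A_\alpha^2)$ interact), so one needs the sharp algebraic inequalities — of the type used by Ejiri, Li, and Xu--Gu — bounding this term below by $nS/r^2 - c_n S^{3/2}$ with the optimal $c_n$, since the threshold $\lambda > (n-2)/((n-1)r^2)$ is exactly what makes the linear-in-$S$ part dominate. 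Getting the constant $\epsilon_r(n,\lambda)$ to be genuinely positive for all admissible $\lambda$ hinges on tracking these constants honestly; I expect the bookkeeping to be routine but lengthy, and the conceptual work to be concentrated in the correct pairing of the Kato constant $1+2/n$, the Sobolev constant, and the Gauss-equation identity $\rho \ge (n-1)/r^2 - (\text{something})\cdot S$.
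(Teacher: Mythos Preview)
Your opening reduction is a genuine gap. It is \emph{not} true that an arbitrary PMC submanifold of $\F$ with $c+H^2>0$ sits as a minimal submanifold of a round sphere of curvature $c+H^2$: take any non-umbilical CMC hypersurface of $\mathbb S^{n+1}$ and include $\mathbb S^{n+1}$ totally geodesically in $\mathbb S^{n+p}$; the result is PMC but not pseudo-umbilical, hence not minimal in any umbilical hypersphere. Yau's codimension-reduction lemma requires $M$ to be umbilical with respect to the line bundle spanned by $\mathbf H$, i.e.\ pseudo-umbilical, and that is precisely what the paper proves \emph{first} (its Proposition in Section~4) by running a separate Simons-type argument on $S_H=\sum_{i,j}(h_{ij}^{n+1})^2$ and showing $B_2:=S_H-nH^2\equiv 0$ under the integral Ricci hypothesis. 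Only after this step does the reduction to the minimal theorem go through; you have skipped it entirely.

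In the minimal case your outline also departs from the paper in a way that matters for the constants. You propose to bound the bad curvature block by $c_n S^{3/2}$ and then multiply by $S^{q-1}$ before integrating. The paper instead uses Ejiri's pointwise inequality
\[
\sum_{\alpha,\beta} N([A_\alpha,A_\beta]) \le 4\bigl[(n-1)-\rho\bigr]S - \tfrac{4}{n}\sum_\alpha (N(A_\alpha))^2,
\]
which inserts the lowest Ricci eigenvalue $\rho$ directly, so that after one H\"older step the deficit is exactly $n\|\beric\|_{n/2}\|S\|_{n/(n-2)}$; there is no $S^{3/2}$ term and no iteration---one simply integrates $\tfrac12\Delta S$ itself and uses the Kato inequality plus the Sobolev lemma on $h_\epsilon$. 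The hypothesis $(n-1)\lambda>(n-2)/r^2$ then enters transparently as the positivity of $\delta$ in $(n-2)-\rho\le -\delta+\beric$, which is what lets the Sobolev parameter $t$ be chosen to cancel the $\int_M S$ terms.
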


This generalizes Corollary~\ref{Cor} for $n \ge 4$. 

\begin{rem}
	Xu \cite{Xu94} proved that, for an $n(\geq 3)$-dimensional closed $M$ with parallel mean curvature in the unit sphere $\mathbb{S}^{n+p}$, if $\|S-nH^2\|_{n/2}< C(n,p,H)$, then $M$ is $\mathbb S^n(\frac{1}{\sqrt{1+H^2}})$. From \eqref{scalar}, $S-nH^2=n(n-1)(1+H^2)-R$. Here $S$ is the norm of the second fundamental form and $R$ is the scalar curvature. Hence Xu's result is an integral perturbation of scalar curvature while our reult is an integral perturbation  of Ricci curvature. On the other hand while $R \le n(n-1)(1+H^2)$, it is not clear if $\Ric \le (n-1)(1+H^2)$ when $H \not =0$. 
\end{rem}

The paper is organized as follows.
In Section 2 we introduce the notations and recall a few results from \cite{Xu94} which we will need. In Section 3 we prove Theorem~\ref{thm_min}, the starting point is the Simons' identity. In Section 4 we prove Theorem~\ref{thm_pmc} by first showing it is pseudo-umbilical, then reducing it to Theorem~\ref{thm_min} with dimension reduction. 

Acknowledgment:  This work is done while the first author is visitng UCSB. He would like to thank UCSB math department for the hospitality, and he also would like to acknowledge financial support from China Scholarship Council. The authors would like to thank Prof. Haizhong Li for bringing the question to our attention.

\section{Preliminaries}
In this paper, we will use the following convention on the ranges of indices except special declaration:
 \begin{equation*}
1\leq A, B, C,...\leq n+p; \quad 1\leq i, j, k,...\leq n;\quad  n+1\leq\alpha, \beta, \gamma,...\leq n+p.
\end{equation*}

Assume that $M^n$ is immersed in $N^{n+p}$.
We choose a local orthonormal frame $\{e_1,...,e_{n+p}\}$ such that $\{e_1,...,e_{n}\}$ are tangent to $M$ and $\{e_{n+1},...,e_{n+p}\}$ are normal to $M$ when restricted to $M$. Let $\{\omega_A\}$ be the dual coframe. Denote

 \begin{equation*}
 h=\sum_{i,j,\alpha}h_{ij}^{\alpha}\omega_i\otimes\omega_j\otimes
e_{\alpha}
 \end{equation*}
 be the second fundamental form be the mean curvature vector of $M$ immersed in $N$, and define
\begin{align}
A_{\alpha}=&(h_{ij}^{\alpha}), \quad H^{\alpha}=\frac{\tr A_{\alpha}}{n},\quad \mathbf{H}=\sum_{\alpha}H^\alpha e_\alpha,\quad H=|\mathbf{H}|=\sqrt{\sum_{\alpha}(H^{\alpha})^2},\quad S=\sum_{i,j,\alpha}(h_{ij}^{\alpha})^2.
\end{align}
We also denote $[A_\alpha,A_\beta]=A_\alpha A_\beta-A_\beta A_\alpha,\sigma_{\alpha\beta}=\sum_{i,j}h_{ij}^\alpha h_{ij}^\beta,$ and $N(\Omega)=\tr(\Omega^t\Omega)$ the norm of  matrix $\Omega$.

When $\mathbf{H}$ is nowhere zero, we always choose $e_{n+1}=\mathbf{H}/H$, and $\{e_i\}$ diagonalizing $A_{n+1}$, i.e. $h_{ij}^{n+1}=\lambda_i^{n+1}\delta_{ij}.$ Denote
\begin{align}
	S_H&=\sum_{i,j}(h_{ij}^{n+1})^2,\quad \mu_i^{n+1}=H-\lambda_i^{n+1}.\label{def-S_H}
\end{align}

It is well-known that Gauss, Codazzi and Ricci equations are following when $N=\F$:
\begin{align}
&R_{ijkl}=c(\delta_{ik}\delta_{jl}
-\delta_{il}\delta_{jk})+\sum_{\alpha}(h_{ik}^{\alpha}h_{jl}^{\alpha}-h_{il}^{\alpha}h_{jk}^{\alpha}),\label{eqG}\\
&h_{ijk}^{\alpha}=h_{ikj}^{\alpha},\label{eqC}\\
&R_{\alpha\beta
	ij}^{\bot}=\sum_{k}h_{ik}^{\alpha}h_{kj}^{\beta}-\sum_{k}h_{ik}^{\beta}h_{kj}^{\alpha},\label{eqR}
\end{align}
where $R_{ijkl}$ and $h_{ijk}^{\alpha}$ are the components of  Riemannian curvature of $M$ and covariant derivative of $ h_{ij}^{\alpha}$ under the orthonormal frame respectively. The Ricci identity shows that
\begin{equation}\label{eq-Ric-id}
h_{ijkl}^{\alpha}-h_{ijlk}^{\alpha}=\sum_{m}R_{mikl}h_{mj}^{\alpha}+\sum_{m}R_{mjkl}h_{im}^{\alpha}+\sum_{\beta}R_{\beta\alpha kl}^{\bot}h_{ij}^{\beta}
\end{equation}
From (\ref{eqG}), we can get the Ricci curvature and the scalar curvature respectively as following:
\begin{align}
R_{ij}&=c(n-1)\delta_{ij}+n\sum_{\alpha}H^{\alpha}h^{\alpha}_{ij}-\sum_{\alpha, k}h^{\alpha}_{ik}h^{\alpha}_{kj},\label{Ricci}\\
R&=cn(n-1)+n^2H^2-S.\label{scalar}
\end{align}
Since $S \ge nH^2$, we have $R \le n(n-1)(c+H^2)$. When $H =0,\ \Ric \le (n-1)c$. 

Next we recall some results which will be used to prove main theorems. Using a Sobolev inequality in \cite{HS74}, Xu proved the following.
\begin{prop}[cf. \cite{Xu94}]\label{prop-int}
Let $M^n (n\geq 3)$  be a closed submanifold in $N^{n+p}$. Suppose $N$ is a simply connected and complete manifold with non-positive curvature. Then for all $t>0$ and $f\in C^1(M), f\geq 0$, we have
\begin{equation}
 \int_M|\nabla f|^2\geq A(n,t)\left(\int_M f^{\frac{2n}{n-2}}\right)^{\frac{n-2}{n}}-B(n,t)\int_M H^2f^2,
\end{equation}
where
\begin{equation}\label{const-def-ABC}
A(n,t)=\frac{(n-2)^2}{4(n-1)^2(1+t)}\frac{1}{C^2(n)}, 
\quad B(n,t)=\frac{(n-2)^2}{4(n-1)^2t} , \quad C(n)=2^{n}\frac{(n+1)^{1+1/n}}{(n-1)\omega_n^{1/n}},
\end{equation}
and $\omega_n$ is the volume of the unit ball in $\mathbb{R}^n$.
\end{prop}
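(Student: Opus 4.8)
The plan is to follow the standard route from a first-order (Michael--Simon / Hoffman--Spruck) Sobolev inequality for submanifolds to the $W^{1,2}$-type inequality stated here, with the free parameter $t$ entering through a single application of a Young-type estimate.

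First I recall the Hoffman--Spruck inequality \cite{HS74}: if $M^n$ is a closed submanifold of a complete, simply connected manifold $N$ of non-positive sectional curvature, then for every non-negative $\phi\in C^1(M)$
\[
\left(\int_M\phi^{\frac{n}{n-1}}\right)^{\frac{n-1}{n}}\le C(n)\int_M\big(|\nabla\phi|+H\phi\big),
\]
with $C(n)=2^{n}\frac{(n+1)^{1+1/n}}{(n-1)\omega_n^{1/n}}$ as in \eqref{const-def-ABC}. (In the general Hoffman--Spruck theorem there is an auxiliary hypothesis bounding the volume of $\mathrm{supp}\,\phi$ in terms of the ambient curvature bound; for non-positively curved $N$ that bound is vacuous, so no restriction on $M$ is needed.) I would then apply this with $\phi=f^{q}$, $q=\frac{2(n-1)}{n-2}$; since $q-1=\frac{n}{n-2}$ and $\frac{qn}{n-1}=\frac{2n}{n-2}=2(q-1)$, this gives
\[
\left(\int_M f^{\frac{2n}{n-2}}\right)^{\frac{n-1}{n}}\le C(n)\left(q\int_M f^{\frac{n}{n-2}}|\nabla f|+\int_M Hf^{q}\right).
\]

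Next I would apply the Cauchy--Schwarz inequality to each term on the right, $\int f^{\frac{n}{n-2}}|\nabla f|\le\big(\int f^{\frac{2n}{n-2}}\big)^{1/2}\big(\int|\nabla f|^2\big)^{1/2}$ and (using $2(q-1)=\frac{2n}{n-2}$) $\int Hf^{q}\le\big(\int f^{\frac{2n}{n-2}}\big)^{1/2}\big(\int H^2f^2\big)^{1/2}$. Writing $I=\int_M f^{2n/(n-2)}$ (the claim being trivial if $I=0$) and dividing through by $I^{1/2}$ leaves
\[
I^{\frac{n-2}{2n}}\le C(n)\Big(q\,\|\nabla f\|_2+\|Hf\|_2\Big),
\]
hence $q\,\|\nabla f\|_2\ge\tfrac{1}{C(n)}I^{\frac{n-2}{2n}}-\|Hf\|_2$. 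Finally I would square this, using the elementary inequality $(a-b)^2\ge\frac{1}{1+t}a^2-\frac1t b^2$ — valid for all real $a,b$ and every $t>0$, being the expansion of $\big(\sqrt{\tfrac{t}{1+t}}\,a-\sqrt{\tfrac{1+t}{t}}\,b\big)^2\ge0$, and remaining correct when $a\le b$ since then its right side is non-positive — to obtain
\[
q^2\int_M|\nabla f|^2\ge\frac{1}{(1+t)C(n)^2}\,I^{\frac{n-2}{n}}-\frac1t\int_M H^2f^2 .
\]
Dividing by $q^2=\frac{4(n-1)^2}{(n-2)^2}$ then produces exactly the asserted inequality with $A(n,t)$, $B(n,t)$ as in \eqref{const-def-ABC}.

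The only genuinely delicate point is the first step: invoking the Hoffman--Spruck inequality in the correct normalisation (the mean curvature taken as the average, not the trace, of the principal curvatures, so that no extra power of $n$ enters $B(n,t)$) and with the explicit constant $C(n)$, together with the observation that its volume hypothesis evaporates once the ambient sectional curvature is $\le0$. Everything afterwards is bookkeeping of exponents plus the one Young-type estimate, and in particular the squaring must be carried out before dividing by $q$ for the constants to come out as stated.
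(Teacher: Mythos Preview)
The paper does not give its own proof of this proposition; it simply attributes the result to Xu \cite{Xu94} after noting that it follows from the Hoffman--Spruck Sobolev inequality \cite{HS74}. Your argument is exactly the standard derivation underlying that citation: apply Hoffman--Spruck to $\phi=f^{2(n-1)/(n-2)}$, estimate the two resulting integrals by Cauchy--Schwarz, and decouple with the Young-type inequality $(a-b)^2\ge\frac{1}{1+t}a^2-\frac1t b^2$. The exponent bookkeeping and the handling of the case $a\le b$ are correct, and the constants match \eqref{const-def-ABC}, so your proof is a faithful reconstruction of what the paper invokes.
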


Now we can prove the following lemma.
\begin{lem}\label{lem-int-sphere}
	Let $M^n (n\geq 3)$  be a closed submanifold in $\F$. Then for all $t>0$ and $f\in C^1(M), f\geq 0$, we have
	\begin{equation}
 \int_M|\nabla f|^2\geq A(n,t)\left(\int_M f^{\frac{2n}{n-2}}\right)^{\frac{n-2}{n}}-B(n,t)\int_M (c_{+}+H^2)f^2,
	\end{equation}  
	where 
\begin{numcases}{c_{+}:=\max\{c, 0\}=}
c, & if $c\geq 0$\label{def-kap1}\\
0, & if $c\leq 0$.\label{def-kap2}
\end{numcases}
\end{lem}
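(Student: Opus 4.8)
\textbf{Proof proposal for Lemma \ref{lem-int-sphere}.}

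The plan is to reduce the statement to Proposition \ref{prop-int} by a standard trick: compose the immersion $M^n \hookrightarrow \F$ with a totally geodesic (or at least curvature non-increasing) embedding of $\F$ into a simply connected complete manifold of non-positive curvature, and then compare the two second fundamental forms. When $c \le 0$, the space form $\F$ already has non-positive curvature, so Proposition \ref{prop-int} applies directly to $M \subset \F$, giving exactly the claimed inequality with $c_+ = 0$. The only real content is the case $c > 0$, i.e.\ $\F = \mathbb S^{n+p}(1/\sqrt c)$.

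For $c > 0$, I would use the standard embedding $\mathbb S^{n+p}(1/\sqrt c) \hookrightarrow \mathbb R^{n+p+1}$ as a round sphere, so that $M^n$ sits inside the flat space $\mathbb R^{n+p+1}$, which is simply connected, complete and has zero (hence non-positive) curvature. Apply Proposition \ref{prop-int} to $M \subset \mathbb R^{n+p+1}$: for all $t>0$ and $f \in C^1(M)$, $f \ge 0$,
\begin{equation*}
\int_M |\nabla f|^2 \ge A(n,t)\left(\int_M f^{\frac{2n}{n-2}}\right)^{\frac{n-2}{n}} - B(n,t)\int_M \bar H^2 f^2,
\end{equation*}
where $\bar H$ is the norm of the mean curvature vector of $M$ in $\mathbb R^{n+p+1}$. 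It remains to bound $\bar H^2$ pointwise by $c + H^2$. The second fundamental form of $M$ in $\mathbb R^{n+p+1}$ splits as the second fundamental form of $M$ in $\mathbb S^{n+p}(1/\sqrt c)$ plus the second fundamental form of $\mathbb S^{n+p}(1/\sqrt c)$ in $\mathbb R^{n+p+1}$ restricted to $TM$; the latter is $\sqrt c$ times the metric tensor in the (outward unit) radial normal direction, which is orthogonal to all of $T\mathbb S^{n+p}$ and in particular to the normal bundle of $M$ inside $\mathbb S^{n+p}$. Hence the mean curvature vectors add orthogonally, giving $\bar H^2 = H^2 + c = c_+ + H^2$ pointwise. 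Substituting this into the displayed inequality yields the lemma.

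The main (and essentially only) obstacle is the bookkeeping for the second fundamental forms under composition of immersions, and making sure the extra normal direction coming from $\mathbb S^{n+p} \subset \mathbb R^{n+p+1}$ is genuinely orthogonal to the normal bundle of $M$ in $\mathbb S^{n+p}$ so that the Pythagorean identity $\bar H^2 = H^2 + c$ holds with no cross terms; this is immediate once one notes the radial direction is normal to all of $\mathbb S^{n+p}$. Everything else is a direct invocation of Proposition \ref{prop-int} with $N = \mathbb R^{n+p+1}$ (resp.\ $N = \F$ when $c \le 0$), and the constants $A(n,t)$, $B(n,t)$, $C(n)$ are inherited unchanged from \eqref{const-def-ABC}.
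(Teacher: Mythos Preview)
Your proposal is correct and follows essentially the same approach as the paper: for $c\le 0$ apply Proposition~\ref{prop-int} directly, and for $c>0$ compose with the standard embedding $\mathbb S^{n+p}(1/\sqrt c)\hookrightarrow\mathbb R^{n+p+1}$ and use the orthogonal splitting of the mean curvature vector to get $\bar H^2=c+H^2$. In fact you have spelled out the Pythagorean computation of $\bar H^2$ more explicitly than the paper does.
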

\begin{proof}
	When $c\leq 0$, it is directly from Lemma \ref{prop-int}.
	
	When $c>0$, considering the composition of isometric immersions $M\to \mathbb S^{n+p}(1/\sqrt{c})\to\mathbb{R}^{n+p+1}$, we obtain the conclusion from Lemma \ref{prop-int} (cf. \cite{Xu94,XY11}).	
\end{proof}

We also need the following Kato-type inequalities.
\begin{lem}[Lemma 1 in \cite{Xu94}]\label{lem-kato}
	Let $M^{n}$ be a submanifold with parallel mean curvature in $\mathbb S^{n+p}.$
Set $f_{\epsilon}=(S_{H}-nH^{2}+n\epsilon^{2})^{1/2}$
and $h_{\epsilon}=(S+np\epsilon^{2})^{1/2}$ for any constant $\epsilon \not = 0 \in \mathbb{R}$. 

(i) If $H\neq0$,then
\begin{equation}\label{kato-pmc}
	\sum_{i,j,k}(h_{ijk}^{n+1})^{2}\geq\frac{n+2}{n}|\nabla f_{\epsilon}|^{2},
\end{equation}  
	
(ii) If $H=0$ , then
\begin{equation}\label{kato-min}
|\nabla h|^2=\sum_{\alpha,i,j.k}(h_{ijk}^{\alpha})^{2}\geq\frac{n+2}{n}|\nabla h_{\epsilon}|^{2}.
		\end{equation}  
\end{lem}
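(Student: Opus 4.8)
The two inequalities share a single mechanism: a refined Kato inequality for a totally symmetric, trace-free $3$-tensor. The plan is first to reduce both regularized statements to their unregularized ``gradient of the norm'' forms, and then to prove the latter by a pointwise algebraic argument. In case (ii) the relevant object is the full covariant derivative $h_{ijk}^\alpha$ (summed over all $\alpha$), with $S=|h|^2$; in case (i), with $e_{n+1}=\mathbf H/H$ fixed, the relevant object is the trace-free part $\tilde h_{ij}:=h_{ij}^{n+1}-H\delta_{ij}$, whose norm satisfies $\sum_{i,j}\tilde h_{ij}^2=S_H-nH^2=\sum_i(\mu_i^{n+1})^2$ and whose derivative is $\nabla_k\tilde h_{ij}=h_{ijk}^{n+1}$ because $H$ is constant under the PMC assumption. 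Since $f_\epsilon\ge|\tilde h|$ and $h_\epsilon\ge|h|$, one checks directly that $|\nabla f_\epsilon|\le|\nabla|\tilde h||$ and $|\nabla h_\epsilon|\le|\nabla|h||$, so it suffices to prove $\sum_{i,j,k}(h_{ijk}^{n+1})^2\ge\frac{n+2}{n}|\nabla|\tilde h||^2$ and $|\nabla h|^2\ge\frac{n+2}{n}|\nabla|h||^2$.

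Next I would record the two structural facts that make the refined constant available. By the Codazzi equation \eqref{eqC}, $h_{ijk}^\alpha$ is symmetric in $j,k$; together with the symmetry of $h_{ij}^\alpha$ in $i,j$ this makes $h_{ijk}^\alpha$ totally symmetric in $i,j,k$. Moreover $\sum_i h_{iik}^\alpha=\nabla_k(nH^\alpha)=0$: in the minimal case all $H^\alpha$ vanish, and in the PMC case $H^{n+1}=H$ is constant, so in either setting the tensor we differentiate is trace-free in every pair of indices. Thus in both cases we are estimating the gradient of the norm of a totally symmetric, trace-free $2$-tensor whose derivative is again totally symmetric and trace-free (in case (i) only the single normal direction $\alpha=n+1$ occurs).

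For the core inequality I would work at a fixed point and choose $e_1$ in the direction of $\nabla|h|$ (resp.\ $\nabla|\tilde h|$), so that $\sum_{i,j,\alpha}h_{ij}^\alpha h_{ijk}^\alpha=0$ for $k\ne1$ and equals $|h|\,\nabla_1|h|$ for $k=1$. Cauchy--Schwarz then gives $|\nabla|h||^2=(\nabla_1|h|)^2\le\sum_{i,j,\alpha}(h_{ij1}^\alpha)^2$, so it is enough to prove $|\nabla h|^2\ge\frac{n+2}{n}\sum_{i,j,\alpha}(h_{ij1}^\alpha)^2$. Grouping the components of $h_{ijk}^\alpha$ according to how many of the indices equal $1$, set (the sum over $\alpha$ and over repeated indices $\ge2$ understood) $\tau_0=\sum(h_{111}^\alpha)^2$, $\tau_1=\sum(h_{11a}^\alpha)^2$, $\tau_2=\sum(h_{1ab}^\alpha)^2$, $\tau_3=\sum(h_{abc}^\alpha)^2$. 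Total symmetry gives $|\nabla h|^2=\tau_0+3\tau_1+3\tau_2+\tau_3$ and $\sum_{i,j,\alpha}(h_{ij1}^\alpha)^2=\tau_0+2\tau_1+\tau_2$, so the desired inequality becomes
\begin{equation*}
-\tfrac{2}{n}\tau_0+\tfrac{n-4}{n}\tau_1+\tfrac{2(n-1)}{n}\tau_2+\tau_3\ge0.
\end{equation*}

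The main obstacle is the negative coefficient of $\tau_0$, which forces us to use trace-freeness rather than symmetry alone. Here the trace identity $\sum_i h_{ii1}^\alpha=0$ yields $h_{111}^\alpha=-\sum_{a\ge2}h_{1aa}^\alpha$, and Cauchy--Schwarz gives $\tau_0\le(n-1)\sum_\alpha\sum_{a\ge2}(h_{1aa}^\alpha)^2\le(n-1)\tau_2$. Substituting, $-\frac2n\tau_0+\frac{2(n-1)}{n}\tau_2\ge0$, and the remaining terms $\frac{n-4}{n}\tau_1+\tau_3$ are non-negative for $n\ge4$ (for $n=3$ one additionally bounds $\tau_1\le(n-1)\tau_3$ from $\sum_i h_{iib}^\alpha=0$). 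This proves the core inequality; combined with the reductions of the first paragraph it yields both \eqref{kato-pmc} and \eqref{kato-min}. The only delicate point is the bookkeeping in the index grouping and the verification that the single trace identity exactly offsets the $\tau_0$ deficit; everything else is Cauchy--Schwarz.
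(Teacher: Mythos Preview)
The paper does not supply its own proof of this lemma: it is quoted verbatim as Lemma~1 of \cite{Xu94} and used as a black box. Your argument is a correct and essentially standard derivation of the refined Kato inequality for totally symmetric trace-free tensors. The reduction $|\nabla f_\epsilon|\le|\nabla|\tilde h||$ (resp.\ $|\nabla h_\epsilon|\le|\nabla|h||$) is valid wherever the unregularized norm is positive, and at points where $|\tilde h|=0$ one has $\nabla f_\epsilon=0$, so the inequality is trivial there; it would be slightly cleaner to bound $f_\epsilon^2|\nabla f_\epsilon|^2=\bigl(\sum_{i,j}\tilde h_{ij}h_{ij1}^{n+1}\bigr)^2\le f_\epsilon^2\sum_{i,j}(h_{ij1}^{n+1})^2$ directly and avoid this case distinction. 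The index-counting $|\nabla h|^2=\tau_0+3\tau_1+3\tau_2+\tau_3$, $\sum_{i,j}(h_{ij1}^\alpha)^2=\tau_0+2\tau_1+\tau_2$, and the use of $\sum_i h_{iik}^\alpha=0$ to get $\tau_0\le(n-1)\tau_2$ (and $\tau_1\le(n-1)\tau_3$ for $n=3$) are all correct. One small caveat: the frame choice ``$e_1$ in the direction of $\nabla|h|$'' is made pointwise for this lemma and need not coincide with the frame diagonalizing $A_{n+1}$ used elsewhere in the paper, which is fine since the inequality is pointwise and frame-independent.
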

\begin{rem}
	In fact, Lemma \ref{lem-kato} remains true when $M$ is a submanifold with parallel mean curvature in $\F$.
	
	Here we require $\epsilon\neq 0$ to make sure the radicands are strictly positive and then we can apply Lemma \ref{lem-int-sphere} to functions $f_\epsilon$ and $h_\epsilon$.
\end{rem}

\section{Minimal Case}
In this section, we prove Theorem \ref{thm_min}.
\begin{proof}[Proof of Theorem \ref{thm_min}]
	 At first, we assume that $r=1$ . Since $\lambda>\frac{n-2}{n-1}$, we can set $\Lambda:=(n-1)\lambda=(n-2)+\delta$ for some $\delta>0$.

	 Gauss equation \eqref{scalar} gives $R=n(n-1)-S$. Since $R \geq n\rho$, we have  
	 \begin{equation}
	 \frac{S-n}{n}\leq(n-2)-\rho.  \label{S-n}
	 \end{equation}
By definition, 	 \begin{equation}
	(n-2)-\rho=-\delta+(\Lambda-\rho)
	 \leq -\delta+\beric. \label{n-2-rho} 
	 \end{equation}
	 
	 Using \eqref{eqG}--\eqref{eq-Ric-id}, after a direct computation, we can obtain the  well-known Simons' identity for a minimal submanifold $M$ in the unit sphere $\mathbb S^{n+p}$ (cf. \cite{Sim68,Eji79}) 
	 \begin{align}\label{eq-Sim}
	 \frac{1}{2}\Delta S=&
	 \sum_{i,j,k,\alpha}(h_{ijk}^\alpha)^2+n\sum_{i,j,\alpha}(h_{ij}^\alpha)^2-\sum_{i,j,k,l,\alpha,\beta}h_{ij}^\alpha h_{ij}^\beta h_{kl}^\alpha h_{kl}^\beta-\sum_{i,j,\alpha,\beta}\Big(\sum_{k}h_{ik}^\alpha h_{kj}^\beta-h_{jk}^\alpha h_{ki}^\beta\Big)^2\\\
	 =&|\nabla h|^2+nS-\sum_{\alpha,\beta}  N([A_\alpha,A_\beta])-\sum_{\alpha,\beta}  \sigma_{\alpha\beta}^2,
	 \end{align}

	 We claim 
	 \begin{align}
	 	\sum_{\alpha,\beta} & N([A_\alpha,A_\beta])\leq 4[(n-1)-\rho]S-\frac{4}{n}\sum_{\alpha}(N(A_\alpha))^2,
	 	\label{eq-clm1}\\
	 	\sum_{\alpha,\beta} & \sigma_{\alpha\beta}^2=\sum_{\alpha}(N(A_\alpha))^2\leq S^2,\label{eq-clm2}
	 \end{align}
	 
	  \eqref{eq-clm2} is obvious, and we use the same argument in \cite{Eji79} to prove \eqref{eq-clm1}.
	  
	 For a fixed $\alpha$, we choose $\{e_i\}$ such that $A_\alpha$ is diagonalized, $A_\alpha=\mbox{diag}\{\lambda_1^\alpha, \cdots, \lambda_n^\alpha\}$, then \eqref{Ricci} gives 
	 \begin{equation}
		\sum_{\beta\neq\alpha, j}(h_{ij}^\beta)^2\leq (n-1)-\rho-(\lambda_i^\alpha)^2 \quad\mbox{for each } i,
	 \end{equation}
	 and
	 \begin{align}
	 \sum_{\beta}N([A_\alpha,A_\beta])=&\sum_{\beta\neq\alpha}N([A_\alpha,A_\beta])=\sum_{\beta\neq\alpha, i,j}(h_{ij}^\beta)^2(\lambda_i^\alpha-\lambda_j^\alpha)^2\\
	 \leq & 2\sum_{\beta\neq\alpha, i,j}(h_{ij}^\beta)^2\Big((\lambda_i^\alpha)^2+(\lambda_j^\alpha)^2\Big)=\sum_{\beta\neq\alpha, i,j}4(h_{ij}^\beta)^2(\lambda_i^\alpha)^2\\
	 \leq & 4\sum_{i}[(n-1)-\rho-(\lambda_i^\alpha)^2](\lambda_i^\alpha)^2.
	 \end{align}
	 Now making summation over $\alpha$, we have 
	 \begin{align}
	 \sum_{\alpha,\beta}  N([A_\alpha,A_\beta])\leq & 4[(n-1)-\rho]S-4\sum_{\alpha}\tr A_\alpha^4, \\ 
	 \leq & 4[(n-1)-\rho]S-\frac{4}{n}\sum_{\alpha}(N(A_\alpha))^2,
	 \end{align}
	 where we used Cauchy-Schwarz inequality in the last inequality, and we complete the proof of the claim.
	 
	 Therefore, from \eqref{eq-Sim},\eqref{eq-clm1} and \eqref{eq-clm2},  we have $\frac{1}{2}\Delta S\geq |\nabla h|^2+ Q$, 
where
	 \begin{align*}
	 	Q:=&S\left[n-4((n-1)-\rho)+\frac{4-n}{n}S\right]\\
	 	=& \frac{n-S}{n} (n-4)S-4\big((n-2)-\rho\big)S\\
	 	\geq &  -(n-4)\big((n-2)-\rho\big)S-4\big((n-2)-\rho\big)S\\
	 	=&  -n\big((n-2)-\rho\big) S. 
	 \end{align*}
	  Here we used \eqref{S-n} for the inequality.

	  From \eqref{n-2-rho}  we have
	  \begin{align}
	  \int_M Q
	  \geq &n\delta\int_{M}S-n\int_{M}\beric S\\
	  \geq& n \delta\int_M S-n \|\beric\|_{n/2}\|S\|_{n/(n-2)},\label{eq-hol}
	  \end{align}
	  here we used H\"{o}lder's inequality in \eqref{eq-hol}.
	  
	 On the other hand, from \eqref{kato-min} and Lemma \ref{lem-int-sphere}, we have
	 \begin{align}
	 	 \int_M|\nabla h|^2\geq&\int_M\frac{n+2}{n}|\nabla h_\epsilon|^2\\
	 	 \geq& \frac{n+2}{n}A(n,t)\|h_\epsilon^2\|_{n/(n-2)}-\frac{n+2}{n}B(n,t)\int_M h_\epsilon^2,
	 \end{align}
where $A(n,t), B(n,t)$ are defined as in \eqref{const-def-ABC}.
Letting $\epsilon\to 0$, we have 
\begin{equation}\label{eq-grad-min}
 \int_M|\nabla h|^2\geq\frac{n+2}{n}A(n,t)\|S\|_{n/(n-2)}-\frac{n+2}{n}B(n,t)\int_M S.
\end{equation}
Then choosing $t$ such that $n\delta=\frac{n+2}{n}B(n,t)$ and from above inequlities, we have 
 \begin{align}
0=&\int_M\frac{1}{2}\Delta S\geq \big(\frac{n+2}{n}A(n,t)-n \|\beric\|_{n/2}\big)\|S\|_{n/(n-2)}\geq 0
\end{align}
provided $\|\beric\|_{n/2}< \frac{n+2}{n^2}A(n,t).$ Hence we have $S\equiv 0$, i.e. $M$ is totally geodesic if we set 
$\epsilon(n,\lambda)=\frac{n+2}{n^2}A(n,t)$.

Now set $\epsilon_r(n,\lambda)=\epsilon(n,\lambda r^2)$, we can prove the theorem for arbitrary $r>0$ by rescaling.
\end{proof}

\begin{rem}\label{rem-const-min}
	In fact,
	\begin{align}
		\epsilon_r(n,\lambda)&=\epsilon(n,\lambda r^2)=\frac{P_n}{1+\frac{1/r^2}{(n-1)\lambda-(n-2)/r^2}P_n}\cdot\frac{1}{C^2(n)}.\label{const-def-epr}
	\end{align}
	where $P_n=\frac{(n+2)(n-2)^2}{4n^2(n-1)^2}$. It is easy to see that $\epsilon(n,\lambda)\to 0^+$ as $\lambda\to (\frac{n-2}{n-1})^+$.
\end{rem}

\section{Parallel Mean Curvature Case}
In this section, we will prove Theorem \ref{thm_pmc}. 
Firstly, we prove the following proposition.

\begin{prop}\label{prop_um}
	Let $M$ be an $n$-dimensional ($n\geq 3$)  submanifold in $\F$ with parallel mean curvature. Assume $c+H^2>0$ and $H\neq 0$. For each 
	$\lambda$ satisfying $(n-2)(c+H^2)<(n-1)\lambda\leq (n-1)(c+H^2)$,
	if 
	\begin{equation}
	\|\beric\|_{n/2}< \epsilon(n,c,\lambda,H), 
	\end{equation}
	then $M$ is pseudo-umbilical.
\end{prop}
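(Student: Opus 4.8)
The plan is to run a Simons-type argument on the mean-curvature direction only, i.e. on $S_H - nH^2$, and show that the integral smallness of $\beric$ forces $S_H = nH^2$, which is precisely the pseudo-umbilical condition (the restriction of the second fundamental form to the $e_{n+1}$-direction is $H\,\mathrm{id}$). First I would rescale so that $c + H^2 = 1$ is replaced by keeping $c+H^2$ explicit, and set $\Lambda := (n-1)\lambda = (n-2)(c+H^2) + \delta$ for some $\delta > 0$, exactly as in the minimal case. Then I would write down the Simons-type identity for $\tfrac12\Delta S_H$ (or rather for $\tfrac12\Delta(S_H - nH^2)$, noting $H$ is constant since the mean curvature is parallel): using the Gauss, Codazzi, and Ricci equations \eqref{eqG}--\eqref{eq-Ric-id} together with the fact that $\mathbf H$ is parallel and $e_{n+1} = \mathbf H/H$, the PMC condition gives $\sum_k h_{kk i}^{n+1} = 0$, and one obtains an expression of the shape
\begin{equation*}
\tfrac12\Delta S_H = \sum_{i,j,k}(h_{ijk}^{n+1})^2 + (\text{curvature terms in } c, H) + (\text{terms involving } A_{n+1}, A_\beta, R^\bot).
\end{equation*}

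The key algebraic step is to bound the zeroth-order terms from below by a multiple of $(S_H - nH^2)$ times a pointwise quantity controlled by $\rho$. Here I would use \eqref{Ricci}: diagonalizing $A_{n+1}$ with eigenvalues $\lambda_i^{n+1}$ and writing $\mu_i = H - \lambda_i^{n+1}$ as in \eqref{def-S_H}, the Ricci lower bound $\rho$ controls $\sum_i (\lambda_i^{n+1})^2$ and the off-diagonal contributions $\sum_{\beta \ne n+1, i,j}(h_{ij}^\beta)^2$ together, since $R_{ii} = (n-1)(c+H^2) - \mu_i^{n+1}\cdot(\text{something}) - \sum_{\beta,k}(h_{ik}^\beta)^2$ after using $nH^\alpha = \delta_{\alpha,n+1}nH$. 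Following Xu's computation in \cite{Xu94} (the PMC analogue of Ejiri's estimate) I expect to reach, after Cauchy-Schwarz on $\sum_\alpha \mathrm{tr}A_\alpha^4$ and the commutator estimate $\sum_\beta N([A_{n+1},A_\beta]) \le 4\sum_i[(n-1)(c+H^2) - \rho - (\lambda_i^{n+1})^2](\lambda_i^{n+1})^2$-type bound, an inequality of the form $\tfrac12\Delta S_H \ge \tfrac{n+2}{n}|\nabla f_\epsilon|^2 + Q$ with $Q \ge -n((n-2)(c+H^2) - \rho)(S_H - nH^2)$, mirroring the minimal case with $S$ replaced by $S_H - nH^2$ and $c+H^2$ playing the role of the sphere curvature. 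The point making this work is that $(n-2)(c+H^2) - \rho \le -\delta + \beric$ pointwise, by the definition of $\beric$.

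Then I would integrate: $0 = \int_M \tfrac12\Delta S_H \ge \tfrac{n+2}{n}\int_M|\nabla f_\epsilon|^2 + n\delta\int_M(S_H - nH^2) - n\int_M \beric\,(S_H-nH^2)$, apply H\"older with exponents $n/2$ and $n/(n-2)$ to the last term, and apply the Sobolev-type inequality of Lemma~\ref{lem-int-sphere} (valid since $c_+ + H^2 \le c + H^2$ when $c > 0$, and $= H^2 \le c+H^2$ when $c \le 0$) together with the Kato inequality \eqref{kato-pmc} to the gradient term, just as in \eqref{eq-grad-min}. Choosing $t$ so that $n\delta = \tfrac{n+2}{n}B(n,t)(c+H^2)$ (or the appropriate constant incorporating $c_+ + H^2$), I get $0 \ge \big(\tfrac{n+2}{n}A(n,t) - n\|\beric\|_{n/2}\big)\|S_H - nH^2\|_{n/(n-2)}$, so that $\|\beric\|_{n/2} < \epsilon(n,c,\lambda,H) := \tfrac{n+2}{n^2}A(n,t)$ forces $S_H \equiv nH^2$, i.e. $M$ is pseudo-umbilical. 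Letting $\epsilon \to 0$ in the Kato step requires care as before.

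\textbf{Main obstacle.} The hard part will be the Simons-type computation in the PMC setting restricted to the $e_{n+1}$-direction: one must verify that the cross terms coupling $A_{n+1}$ to the other shape operators $A_\beta$ and to the normal curvature $R^\bot_{\beta,n+1,ij}$ either have a favorable sign or can be absorbed, using only the PMC hypothesis (not minimality), and that the resulting pointwise coefficient of $S_H - nH^2$ is genuinely bounded below by $-n((n-2)(c+H^2)-\rho)$ rather than something weaker. This is where \cite{Xu94}'s Lemma~1 and the associated identities (quoted here as Lemma~\ref{lem-kato} and remarked to hold in $\F$) are essential; I would lean on them to handle the gradient term, and I anticipate needing the analogue of the scalar identity $S_H - nH^2 = n(n-1)(c+H^2) - R - (\text{non-}e_{n+1}\text{ terms})$ to be careful about whether the full $S$ or only $S_H$ appears. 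A secondary subtlety is that the Ricci lower bound controls the \emph{full} second fundamental form, so when I discard the $A_\beta$ ($\beta \ne n+1$) contributions to pass from $S$ to $S_H$ I should check I am not throwing away a term of the wrong sign; if necessary I would carry both $S_H - nH^2$ and the orthogonal complement and show the latter is controlled too, which is anyway needed downstream to reduce Theorem~\ref{thm_pmc} to Theorem~\ref{thm_min}.
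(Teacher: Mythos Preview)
Your proposal is correct and follows essentially the same approach as the paper: set $\delta$, derive the Simons-type lower bound $\tfrac12\Delta S_H \ge \sum_{i,j,k}(h^{n+1}_{ijk})^2 + (S_H-nH^2)\,Q$, bound $Q \ge n(\delta-\beric)$ pointwise via Gauss, then integrate using Kato \eqref{kato-pmc}, the Sobolev inequality (Lemma~\ref{lem-int-sphere}), and H\"older, choosing $t$ so that $n\delta = \tfrac{n+2}{n}B(n,t)(c_{+}+H^2)$. The paper shortcuts your ``main obstacle'' by quoting the Simons inequality directly from (3.7) of \cite{XG13}, with $Q = n(c+H^2) - \tfrac{n-3}{n-2}(S-nH^2) - \tfrac{n}{n-2}[(n-1)(c+H^2)-\rho]$ depending on the full $S$ (this also resolves your secondary subtlety, since the Gauss bound $S-nH^2 \le n[(n-1)(c+H^2)-\rho]$ then gives $Q \ge n[\rho-(n-2)(c+H^2)]$ immediately).
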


\begin{rem}
	We recall that (see \cite{YC71,Chen73}) $M$ is pseudo-umbilical if and only if $\mathbf{H}$ is nowhere zero and the second fundamental form in the direction of $\mathbf{H}$ has the same eigenvalues everywhere, i.e., there exists a function $\lambda$ on $M$ such that
	\begin{equation}
		\langle h(X,Y), \mathbf{H}\rangle=\lambda\langle X,Y\rangle
	\end{equation}
	for all tangent vector field $X, Y$ on $M$. 
\end{rem}

\begin{proof}
	Set $\Lambda:=(n-1)\lambda=(n-2)(c+H^2)+\delta$ for some $\delta>0$.
	From Gauss equation we have 
	\begin{equation}\label{eq-pmc}
	S-nH^2\leq n[(n-1)(c+H^2)-\rho]\leq n[-\delta+(c+H^2)+\beric],
	\end{equation}

Recall the definition of $S_H$ in \eqref{def-S_H}, by some direct computations, we obtain the following estimate (see (3.7) in \cite{XG13}).
\begin{align}
	\frac{1}{2}\Delta S_H\geq & \sum_{i,j,k}(h_{ijk}^{n+1})^2+B_2 Q,
\end{align}
where $B_2=S_H-nH^2=\sum_i(\mu_i^{n+1})^2$ and 	
\begin{equation}
	Q=n(c+H^2)-\frac{n-3}{n-2}(S-nH^2)-\frac{1}{n-2}n[(n-1)(c+H^2)-\rho].
\end{equation}
Now similar as in the proof of Theorem \ref{thm_min}, 
by using \eqref{eq-pmc}, \eqref{kato-pmc} and Lemma \ref{lem-int-sphere}, we have
\begin{align}
	\int_M B_2Q\geq & n\delta \int_M B_2 -\|\beric\|_{n/2}\|B_2\|_{n/(n-2)}.\\
	\int_{M}\sum_{i,j,k}(h_{ijk}^{n+1})^2\geq&\frac{n+2}{n}A(n,t)\|B_2\|_{n/(n-2)}-\frac{n+2}{n}B(n,t)(c_{+}+H^2)\int_M B_2.
\end{align}
Choose $t$ such that $n\delta=\frac{n+2}{n}B(n,t)(c_{+}+H^2)$, then 
\begin{align}
0=&\int_M\frac{1}{2}\Delta S_H\geq \big(\frac{n+2}{n}A(n,t)-n \|\beric\|_{n/2}\big)\|B_2\|_{n/(n-2)}\geq 0
\end{align}
provided $\|\beric\|_{n/2}< \frac{n+2}{n^2}A(n,t).$ 
Hence we have $B_2\equiv 0$, i.e. $M$ is pseudo-umbilical if we set $\epsilon(n,c,\lambda,H)=\frac{n+2}{n^2}A(n,t)$.
\end{proof}

\begin{rem}\label{rem-const-pmc}
In fact,
	\begin{equation}
	\epsilon(n,c,\lambda,H)=\frac{P_n}{1+\frac{c_{+}+H^2}{(n-1)\lambda-(n-2)(c+H^2)}P_n}\cdot\frac{1}{C^2(n)},
\label{const-def-ep-pmc}
	\end{equation}
	where $c_{+}$ is defined as in Lemma \ref{lem-int-sphere}.
	
	We also have
	 $\epsilon(n,c,\lambda,H)=\epsilon_{1/\sqrt{c+H^2}}(n,\lambda)$ for $c\geq0$, and $\epsilon(n,c,\lambda,H)<\epsilon_{1/\sqrt{c+H^2}}(n,\lambda)$ for $c<0$ from Remark \ref{rem-const-min}.
\end{rem}

\begin{proof}[Proof of Theorem \ref{thm_pmc}]
	 The proof is same as the proof of Theorem 3.3 in \cite{XG13} and the following lemma on reduction of codimension due to Yau \cite{Yau74} will be used.
	 
	\begin{lem}[Theorem 1 in \cite{Yau74}]\label{lem-redu}
		Let $N$ be a conformally flat manifold. Let $N_1$ be a sub-bundle of the normal bundle of $M$ with fiber dimension $k$. Suppose $M$ is umbilical with respect to $N_1$ and $N_1$ is parallel in the normal bundle. Then $M$ lies in an $n+p-k$ dimensional umbilical submanifold $N'$ of $N$ such that the fiber of $N_1$ is everywhere perpendicular to $N'$. 
	\end{lem}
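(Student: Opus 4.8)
The plan is to deduce the codimension reduction by passing to the round conformal model of $N$ and realizing $N'$ as the intersection of the model sphere with a fixed affine subspace. Write the normal bundle of $M$ in $N$ as the orthogonal sum $N_1\oplus N_2$ with $N_2:=N_1^{\perp}$. Since $N_1$ is $\nabla^{\perp}$-parallel and $\nabla^{\perp}$ is a metric connection, $N_2$ is $\nabla^{\perp}$-parallel as well: for $\xi\in\Gamma(N_2)$ and $\zeta\in\Gamma(N_1)$ one has $0=X\l\xi,\zeta\r=\l\nabla^{\perp}_X\xi,\zeta\r$, so $\nabla^{\perp}_X\xi\in N_2$. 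By the umbilicity hypothesis there is a section $\eta\in\Gamma(N_1)$ with $A_\zeta=\l\eta,\zeta\r\,\mathrm{Id}$ for every $\zeta\in\Gamma(N_1)$; this $\eta$ is the candidate mean curvature vector of the submanifold $N'$ to be constructed.

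Since conformal flatness is local and total umbilicity (both of $M$ with respect to $N_1$ and of the enveloping $N'$) is a conformal invariant, I would first reduce to the case in which $N$ is an open set of the round sphere $\mathbb S^{n+p}\subset\mathbb R^{n+p+1}$. The delicate point here is that $\nabla^{\perp}$-parallelism of $N_1$ is not manifestly conformally invariant, so I would track how the normal connection transforms under the conformal change and verify that the only consequences actually used below survive, namely that $\eta$ is $\nabla^{\perp}$-parallel and of constant length; the vanishing of the Weyl tensor of $N$ is exactly what forces the relevant mixed curvature terms in the Ricci and Codazzi equations to drop out.

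With $N=\mathbb S^{n+p}$, let $x\colon M\to\mathbb R^{n+p+1}$ be the position vector and $D$ the flat connection of $\mathbb R^{n+p+1}$, so that the second fundamental form of $\mathbb S^{n+p}$ in $\mathbb R^{n+p+1}$ is $-\l Z,Z'\r x$. I would first use the Codazzi equation together with the $N_1$-umbilicity to show that $\eta$ is parallel in $N_1$ with $|\eta|$ constant; this identifies a fixed center $b\in\mathbb R^{n+p+1}$ with $x-b$ proportional to $\eta-x$, and I would check $D_X b=0$ for all $X\in TM$. The core step is then to show that the subbundle $\mathcal V:=\mathbb R(x-b)\oplus TM\oplus N_2$ of the trivial bundle $M\times\mathbb R^{n+p+1}$ is $D$-parallel, i.e. a fixed subspace $V_0$. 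Differentiating a local frame: $D_X(x-b)=X\in TM$; the derivative of an $N_2$-section stays in $TM\oplus N_2$ by the Weingarten formula and the parallelism of $N_2$; and the derivative of a $TM$-section produces an $N_1$-component $\l X,Y\r\eta$ which, combined with the sphere term $-\l X,Y\r x$, equals $\l X,Y\r(\eta-x)$ and hence lies along $x-b\in\mathcal V$. Thus all derivatives remain in $\mathcal V$, so $\mathcal V=V_0$ is constant, and $D_X(x-b)\in V_0$ forces $x-b$ to stay in $V_0$; setting $\Pi:=b+V_0$ gives $x(M)\subset\Pi$.

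Finally I would read off the conclusion: $N':=\mathbb S^{n+p}\cap\Pi$ is a totally umbilical submanifold of dimension $\dim V_0-1=n+p-k$, its tangent bundle along $M$ is $TM\oplus N_2$, hence its normal bundle in $\mathbb S^{n+p}$ is exactly $N_1$, and $M\subset N'$ with $N_1$ everywhere perpendicular to $N'$; undoing the conformal reduction returns the statement for the original conformally flat $N$. The main obstacle is the core step of the previous paragraph, namely showing that the enlarged tangent distribution $TM\oplus N_2$ does not rotate along $M$ so that $\mathcal V$ is a genuinely fixed subspace. This is where all three hypotheses must be used at once: $N_1$-umbilicity to control the $N_1$-valued part of the shape operators, parallelism of $N_1$ (hence of $N_2$) to keep normal derivatives inside the distribution, and conformal flatness to guarantee that $\eta$ is parallel of constant length so that the center $b$ is truly constant.
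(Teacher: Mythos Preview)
The paper does not prove this lemma: it is simply quoted as Theorem~1 of \cite{Yau74} and invoked as a black box inside the proof of Theorem~\ref{thm_pmc}. There is therefore no argument in the present paper against which your proposal can be compared; any such comparison would have to be made with Yau's original article rather than with this one.
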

	 
	 When $H=0$, that is Theorem \ref{thm_min}.
	 
	 When $H\neq 0$. If $p=1$, then the conclusion is from Proposition \ref{prop_um}. 
	 
	 If $p\geq 2$, we can conclude $M$ is a minimal submanifold in $\mathbb S^{n+p-1}(\frac{1}{\sqrt{c+H^2}})$. The detail can be found in \cite{XG13}, but we restate it briefly for convenience of the reader. From Lemma \ref{lem-redu}, $M$ actually lies in 
	 $\mathbb{M}^{n+p-1}_{\tilde{c}}$. Then $\mathbf{H}$ is decomposed orthogonally into two parts
	 \begin{equation}
	 \mathbf{H}=\mathbf{H}_1+\mathbf{H}_2,
	 \end{equation}
	 where $\mathbf{H}_1$ is the mean curvature  of $M$ in $\mathbb{M}^{n+p-1}_{\tilde{c}}$, and $\mathbf{H}_2$ is normal to $\mathbb{M}^{n+p-1}_{\tilde{c}}$ in $\F$. But $\mathbf{H}\perp \mathbf{H}_1$ from Lemma \ref{lem-redu} again, we have $\mathbf{H}_1=0$, which means $M$ is minimal in $\mathbb{M}^{n+p-1}_{\tilde{c}}$, and $H=|\mathbf{H}|=|\mathbf{H}_2|$, which derives  $\tilde{c}^2=c^2+H^2$ from Gauss equation.
		 
	 Since $\epsilon(n,c,\lambda,H)\leq\epsilon_{1/\sqrt{c+H^2}}(n,\lambda)$ from Remark \ref{rem-const-pmc}, we complete the proof by applying Theorem \ref{thm_min}. 
\end{proof}

\begin{bibdiv}
	\begin{biblist}
		
		\bib{Chen73}{article}{
			author={Chen, Bang-yen},
			title={Pseudo-umbilical submanifolds of a {R}iemannian manifold of
				constant curvature. {II}},
			date={1973},
			ISSN={0025-5645},
			journal={J. Math. Soc. Japan},
			volume={25},
			pages={105\ndash 114},
			url={https://doi.org/10.2969/jmsj/02510105},
			review={\MR{0326622}},
		}
		
		\bib{CdCK70}{incollection}{
			author={Chern, S.~S.},
			author={do~Carmo, M.},
			author={Kobayashi, S.},
			title={Minimal submanifolds of a sphere with second fundamental form of
				constant length},
			date={1970},
			booktitle={Functional {A}nalysis and {R}elated {F}ields ({P}roc. {C}onf. for
				{M}. {S}tone, {U}niv. {C}hicago, {C}hicago, {I}ll., 1968)},
			publisher={Springer, New York},
			pages={59\ndash 75},
			review={\MR{0273546}},
		}
		
		\bib{Eji79}{article}{
			author={Ejiri, Norio},
			title={Compact minimal submanifolds of a sphere with positive {R}icci
				curvature},
			date={1979},
			ISSN={0025-5645},
			journal={J. Math. Soc. Japan},
			volume={31},
			number={2},
			pages={251\ndash 256},
			url={http://dx.doi.org/10.2969/jmsj/03120251},
			review={\MR{527542}},
		}
		
		\bib{Gau86}{article}{
			author={Gauchman, Hillel},
			title={Minimal submanifolds of a sphere with bounded second fundamental
				form},
			date={1986},
			ISSN={0002-9947},
			journal={Trans. Amer. Math. Soc.},
			volume={298},
			number={2},
			pages={779\ndash 791},
			url={http://dx.doi.org/10.2307/2000649},
			review={\MR{860393}},
		}
		
		\bib{HS74}{article}{
			author={Hoffman, David},
			author={Spruck, Joel},
			title={Sobolev and isoperimetric inequalities for {R}iemannian
				submanifolds},
			date={1974},
			ISSN={0010-3640},
			journal={Comm. Pure Appl. Math.},
			volume={27},
			pages={715\ndash 727},
			url={https://doi.org/10.1002/cpa.3160270601},
			review={\MR{0365424}},
		}
		
		\bib{Ito75}{article}{
			author={Itoh, Takehiro},
			title={On {V}eronese manifolds},
			date={1975},
			ISSN={0025-5645},
			journal={J. Math. Soc. Japan},
			volume={27},
			number={3},
			pages={497\ndash 506},
			review={\MR{0383309}},
		}
		
		\bib{Law69}{article}{
			author={Lawson, H.~Blaine, Jr.},
			title={Local rigidity theorems for minimal hypersurfaces},
			date={1969},
			ISSN={0003-486X},
			journal={Ann. of Math. (2)},
			volume={89},
			pages={187\ndash 197},
			review={\MR{0238229}},
		}
		
		\bib{LL92}{article}{
			author={Li, An-Min},
			author={Li, Jimin},
			title={An intrinsic rigidity theorem for minimal submanifolds in a
				sphere},
			date={1992},
			ISSN={0003-889X},
			journal={Arch. Math. (Basel)},
			volume={58},
			number={6},
			pages={582\ndash 594},
			url={http://dx.doi.org/10.1007/BF01193528},
			review={\MR{1161925}},
		}
		
		\bib{Li93}{article}{
			author={Li, Haizhong},
			title={Curvature pinching for odd-dimensional minimal submanifolds in a
				sphere},
			date={1993},
			ISSN={0350-1302},
			journal={Publ. Inst. Math. (Beograd) (N.S.)},
			volume={53(67)},
			pages={122\ndash 132},
			review={\MR{1319765}},
		}
		
		\bib{PW97}{article}{
			author={Petersen, Peter},
			author={Wei, Guofang},
			title={Relative volume comparison with integral curvature bounds},
			date={1997},
			ISSN={1016-443X},
			journal={Geom. Funct. Anal.},
			volume={7},
			number={6},
			pages={1031\ndash 1045},
			url={https://doi.org/10.1007/s000390050036},
			review={\MR{1487753}},
		}
		
		\bib{She89}{article}{
			author={Shen, Chun~Li},
			title={A global pinching theorem of minimal hypersurfaces in the
				sphere},
			date={1989},
			ISSN={0002-9939},
			journal={Proc. Amer. Math. Soc.},
			volume={105},
			number={1},
			pages={192\ndash 198},
			url={https://doi.org/10.2307/2046755},
			review={\MR{973845}},
		}
		
		\bib{She92}{article}{
			author={Shen, Yi~Bing},
			title={Curvature pinching for three-dimensional minimal submanifolds in
				a sphere},
			date={1992},
			ISSN={0002-9939},
			journal={Proc. Amer. Math. Soc.},
			volume={115},
			number={3},
			pages={791\ndash 795},
			url={http://dx.doi.org/10.2307/2159229},
			review={\MR{1093604}},
		}
		
		\bib{Sim68}{article}{
			author={Simons, James},
			title={Minimal varieties in riemannian manifolds},
			date={1968},
			ISSN={0003-486X},
			journal={Ann. of Math. (2)},
			volume={88},
			pages={62\ndash 105},
			review={\MR{0233295}},
		}
		
		\bib{Xu94}{article}{
			author={Xu, Hong~Wei},
			title={{$L_{n/2}$}-pinching theorems for submanifolds with parallel mean
				curvature in a sphere},
			date={1994},
			ISSN={0025-5645},
			journal={J. Math. Soc. Japan},
			volume={46},
			number={3},
			pages={503\ndash 515},
			url={http://dx.doi.org/10.2969/jmsj/04630503},
			review={\MR{1276835}},
		}
		
		\bib{XG13}{article}{
			author={Xu, Hong-Wei},
			author={Gu, Juan-Ru},
			title={Geometric, topological and differentiable rigidity of
				submanifolds in space forms},
			date={2013},
			ISSN={1016-443X},
			journal={Geom. Funct. Anal.},
			volume={23},
			number={5},
			pages={1684\ndash 1703},
			url={https://doi.org/10.1007/s00039-013-0231-x},
			review={\MR{3102915}},
		}
		
		\bib{XT11}{article}{
			author={Xu, Hong-Wei},
			author={Tian, Ling},
			title={A new pinching theorem for closed hypersurfaces with constant
				mean curvature in {$S^{n+1}$}},
			date={2011},
			ISSN={1093-6106},
			journal={Asian J. Math.},
			volume={15},
			number={4},
			pages={611\ndash 630},
			url={http://dx.doi.org/10.4310/AJM.2011.v15.n4.a6},
			review={\MR{2853651}},
		}
		
		\bib{XLG14}{article}{
			author={Xu, HongWei},
			author={Leng, Yan},
			author={Gu, JuanRu},
			title={Geometric and topological rigidity for compact submanifolds of
				odd dimension},
			date={2014},
			ISSN={1674-7283},
			journal={Sci. China Math.},
			volume={57},
			number={7},
			pages={1525\ndash 1538},
			url={https://doi.org/10.1007/s11425-013-4755-1},
			review={\MR{3213887}},
		}
		
		\bib{XY11}{article}{
			author={Xu, Hongwei},
			author={Yang, Dengyun},
			title={The gap phenomenon for extremal submanifolds in a sphere},
			date={2011},
			ISSN={0926-2245},
			journal={Differential Geom. Appl.},
			volume={29},
			number={1},
			pages={26\ndash 34},
			url={http://dx.doi.org/10.1016/j.difgeo.2010.11.001},
			review={\MR{2784286}},
		}
		
		\bib{YC71}{article}{
			author={Yano, Kentaro},
			author={Chen, Bang-yen},
			title={Minimal submanifolds of a higher dimensional sphere},
			date={1971},
			ISSN={0040-3504},
			journal={Tensor (N.S.)},
			volume={22},
			pages={369\ndash 373},
			review={\MR{0287495}},
		}
		
		\bib{Yau74}{article}{
			author={Yau, Shing~Tung},
			title={Submanifolds with constant mean curvature},
			date={1974},
			ISSN={0002-9327},
			journal={Amer. J. Math.},
			volume={96},
			number={2},
			pages={346\ndash 366},
			url={http://www.jstor.org/stable/2373638},
			review={\MR{0370443}},
		}
		
		\bib{Yau75}{article}{
			author={Yau, Shing~Tung},
			title={Submanifolds with constant mean curvature {II}},
			date={1975},
			ISSN={0002-9327},
			journal={Amer. J. Math.},
			volume={97},
			number={1},
			pages={76\ndash 100},
			url={http://www.jstor.org/stable/2373661},
			review={\MR{0370443}},
		}
		
	\end{biblist}
\end{bibdiv}
\end{document}